\newcommand{\showcomments}{yes}
\newsavebox{\commentbox}
\newcounter{intronum}
\newcounter{ax}
\newtheorem{thmi}{Theorem}
\theoremstyle{definition}
\newtheorem{remi}[intronum]{Remark}
\newtheorem{claim*}{Claim}
\DeclareMathOperator{\dimension}{dim}
\newcommand{\field}[1]{\mathbb{#1}}
\newcommand{\naturals}{\ensuremath{\field{N}}}
\newcommand{\reals}{\ensuremath{\field{R}}}
\newcommand{\Euclidean}{\ensuremath{\field{E}}}
\newcommand{\Rmnum}[1]{\mathbf{{\expandafter\@slowromancap\romannumeral #1@}}}
\newcommand{\simp}{\ensuremath{\partial_{_{\triangle}}}}
\let\oldmarginpar\marginpar
\renewcommand\marginpar[1]{\-\oldmarginpar[\raggedleft\footnotesize #1]%
{\raggedright\footnotesize #1}}
\newcounter{enumitemp}
\newcommand{\cuco}[1]{\mathcal{#1}}
\begin{document}
\title{Corrigendum to ``The simplicial boundary of a CAT(0) cube complex''}
\author[M.F. Hagen]{Mark F. Hagen}
\address{Dept. of Pure Maths. and Math. Stat., University of Cambridge, Cambridge, UK}
\email{markfhagen@gmail.com}
\date{\today}
\maketitle

\begin{abstract}{We correct Theorem 3.10 of~\cite{Hagen:boundary} in the infinite-dimensional case.  No correction is needed in the finite-dimensional case.}
\end{abstract} 

In this note, we correct Theorem 3.10 of~\cite{Hagen:boundary} and record consequent adjustments to later statements.  In~\cite{Hagen:boundary}, we worked in a CAT(0) cube complex $\mathbf X$ with the property that there is no infinite collection of pairwise-crossing hyperplanes.  Such a complex may still contain cubes of arbitrarily large dimension.  It is in this situation that Theorem 3.10 requires correction; under the stronger hypothesis that $\dimension\mathbf X<\infty$, the theorem and its consequences hold as written in~\cite{Hagen:boundary}.  The extant results that use the simplicial boundary (see~\cite{chatterji2016median,durham2016boundaries,behrstock2012cubulated,hagen2014cocompactly,hagen2016hierarchical}) all concern finite-dimensional CAT(0) cube complexes and are thus unaffected.

\subsection*{Acknowledgment} I thank Elia Fioravanti for drawing my attention to the infinite-dimensional case and for reading a draft of this note.

\section*{Corrected statement and proof of Theorem 3.10}\label{sec:thm_proof}
Throughout, $\mathbf X$ denotes a CAT(0) cube complex which, according to standing hypotheses in~\cite{Hagen:boundary}, has countably many cubes and hyperplanes.  
The following replaces~\cite[Theorem 3.10]{Hagen:boundary}:

\begin{thmi}\label{thm:main}
Suppose that every collection of pairwise crossing hyperplanes in $\mathbf X$ is finite.  Let $v$ be an almost-equivalence class of UBSs.  Then $v$ has a representative of the form $\mathcal V=\bigsqcup_{i\in I}\mathcal U_i$, where $I$ is a finite or countably infinite set, each $\mathcal U_i$ is minimal, and for all distinct $i,j\in I$, if $H\in\mathcal U_j$, then $H$ crosses all but finitely many elements of $\mathcal U_i$, or the same holds with the roles of $i,j$ reversed.  Moreover, if $\dimension\mathbf X<\infty$, then the following hold:
\begin{itemize}
 \item $k=|I|\leq\dimension\mathbf X$;
 \item for all $1\leq i<j\leq k$, if $H\in\mathcal U_j$, then $H$ crosses all but finitely many elements of $\mathcal U_i$;
 \item if $\mathcal V'=\bigsqcup_{i\in I'}\mathcal U_i'$ is almost-equivalent to $\mathcal V$, and each $\mathcal U_i'$ is minimal, then $|I'|=k$ and, up to reordering, $\mathcal U_i$ and $\mathcal U_i'$ are almost equivalent for all $i$.   
\end{itemize}
\end{thmi}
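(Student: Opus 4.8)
The plan is to split the theorem into two parts. The first sentence — existence of a decomposition into minimal UBSs with the crossing dichotomy — must be reproved under the weaker standing hypothesis that every pairwise-crossing family of hyperplanes is finite. The three bulleted refinements are then obtained, under $\dimension\mathbf X<\infty$, essentially by the argument of \cite{Hagen:boundary}, whose only gap in the general case is an invalid appeal to finiteness of $|I|$.

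For the decomposition I would apply Zorn's lemma to the poset $\Sigma$ of families $\{\mathcal U_i\}_{i\in I}$ of pairwise disjoint minimal sub-UBSs of a fixed representative $\mathcal V$ of $v$, ordered by inclusion of families; a chain has its union as an upper bound, since any two members of the union lie in a common family of the chain and hence are disjoint. Fix a maximal family $\{\mathcal U_i\}_{i\in I}$. The key point — and the place where the new work lies — is that the residue $\mathcal R=\mathcal V\setminus\bigcup_i\mathcal U_i$ is finite: otherwise one extracts from $\mathcal R$ a UBS, hence by \cite{Hagen:boundary} a minimal sub-UBS, which is disjoint from every $\mathcal U_i$ and so enlarges the family, contradicting maximality. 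Once $\mathcal R$ is finite, countability of $I$ is immediate, since the $\mathcal U_i$ are nonempty and pairwise disjoint and $\mathbf X$ has, by standing hypothesis, only countably many hyperplanes; a routine adjustment, enlarging the pieces by the finitely many hyperplanes of $\mathcal R$ needed to keep their union inseparable, produces an honest representative of $v$ of the stated form. Finally, the dichotomy between distinct pieces is a statement about any two minimal, disjoint sub-UBSs of a common UBS, proved from inseparability and unidirectionality as in \cite{Hagen:boundary} with no reference to $\dimension\mathbf X$.

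For the refinements, assume $d=\dimension\mathbf X<\infty$. I would recover the original strategy through the peeling lemma of \cite{Hagen:boundary}: a non-minimal UBS $\mathcal W$ is almost-equal to $\mathcal U\sqcup\mathcal W'$, with $\mathcal U$ minimal, $\mathcal W'$ a UBS, and every element of $\mathcal W'$ crossing all but finitely many elements of $\mathcal U$. Peeling $\mathcal U_1$ off $\mathcal V$, then $\mathcal U_2$ off the remainder, and so on, builds pieces satisfying the triangular crossing condition by construction; and after $m$ peelings, selecting a hyperplane in the current infinite remainder together with suitable representatives of $\mathcal U_1,\dots,\mathcal U_m$ yields $m+1$ pairwise-crossing hyperplanes, so $m+1\leq d$ because $\dimension\mathbf X$ equals the supremum of the sizes of pairwise-crossing families. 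Hence the recursion stops with $k=|I|\leq d$. For uniqueness, given a second minimal decomposition $\mathcal V\approx\bigsqcup_{i'\in I'}\mathcal U'_{i'}$, intersect each $\mathcal U'_{i'}$ with the $\mathcal U_j$: each infinite intersection is an inseparable infinite subset of a UBS, hence a UBS, hence cofinite in both $\mathcal U'_{i'}$ and $\mathcal U_j$ by minimality, so at most one $j$ gives an infinite intersection; using $k<\infty$, so that the remaining finitely many intersections are collectively finite, $\mathcal U'_{i'}$ is almost-equal to a unique $\mathcal U_{\sigma(i')}$, and symmetry makes $\sigma$ a bijection with $\mathcal U'_{i'}$ almost-equivalent to $\mathcal U_{\sigma(i')}$.

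I expect the main obstacle to be exactly the finiteness of $\mathcal R$ in the infinite-dimensional case. When $d<\infty$, peeling is forced to halt, since each stage exhibits a strictly larger pairwise-crossing family; this is precisely where \cite[Theorem 3.10]{Hagen:boundary} implicitly used finite dimension to conclude $|I|<\infty$. Without it, one must bound $\mathcal R$ over a maximal family directly, which appears to need a form of the complementary-UBS construction valid for a countable collection of sub-UBSs, presumably via a diagonal argument over an enumeration of the hyperplanes of $\mathcal V$. The same infinite-dimensional behaviour explains why the three refinements must be weakened: peeling need not terminate, so $|I|$ is only countable; there need be no consistent order on the pieces, only the pairwise dichotomy; and a minimal piece of one decomposition may be distributed across infinitely many pieces of another, contributing finitely many hyperplanes to each and so almost-equivalent to none — which is the mechanism by which the uniqueness conclusion fails when $\dimension\mathbf X=\infty$.
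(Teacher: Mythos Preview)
Your Zorn's lemma approach to the existence part has a genuine gap at exactly the point you flag, and it is not merely a technicality. From an infinite residue $\mathcal R$ you extract a chain $\mathcal C$, but to get a minimal UBS you must pass to the inseparable closure $\overline{\mathcal C}$ in $\mathcal V$, and there is no reason for $\overline{\mathcal C}$ to stay inside $\mathcal R$: the complement of a union of inseparable sets in an inseparable set need not be inseparable. If $\overline{\mathcal C}$ meets some $\mathcal U_i$ in an infinite set, minimality does force $\overline{\mathcal C}$ to be almost equal to $\mathcal U_i$, contradicting $\mathcal C\subseteq\mathcal R$; but when $I$ is infinite, $\overline{\mathcal C}$ can meet each $\mathcal U_i$ finitely while still meeting $\bigcup_i\mathcal U_i$ in an infinite set, and then no minimal sub-UBS of $\overline{\mathcal C}$ need lie in $\mathcal R$. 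So maximality of the family does not by itself force $|\mathcal R|<\infty$, and the diagonal argument you gesture at would have to produce, from a chain in $\mathcal R$, a minimal UBS \emph{contained in} $\mathcal R$ --- it is not clear how to do this.

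The paper's proof sidesteps this by never forming a residue at all. It builds the pieces recursively, choosing each $\mathcal U_1$ to be the inseparable closure of a chain that is inextensible in $\mathcal V$, and then partitions the remainder $\mathcal V\setminus\mathcal U_1$ into $\mathcal V_1^{+}$ (hyperplanes crossing all but finitely many of $\mathcal U_1$) and $\mathcal V_1^{-}$ (the rest). The point is that, using the explicit chain description of $\mathcal U_1$, one checks directly that each of $\mathcal V_1^{\pm}$ is inseparable, hence is either finite or again a UBS, with $\mathcal V_1^{-}\prec\mathcal U_1\prec\mathcal V_1^{+}$. One then recurses into each infinite piece. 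Because every inextensible chain is eventually absorbed by some $\mathcal U_i$, the leftover at the end contains no chain and is therefore finite. Thus the crossing dichotomy is built into the construction rather than proved afterwards for an arbitrary maximal family, and the finiteness of the complement is a consequence of the recursion, not of Zorn maximality.

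Your treatment of the finite-dimensional refinements is essentially what the paper does: the bound $k\le\dimension\mathbf X$ comes from exhibiting, for each finite subset of $I$, a pairwise-crossing family of the same size; the ordering is obtained (in the paper via a directed acyclic graph on the pieces) once $k<\infty$; and uniqueness is exactly the intersection argument you describe, which needs $k<\infty$ so that the finitely many finite intersections have finite union.
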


\begin{remi}
Under the additional hypothesis that $\mathbf X$ is finite-dimensional, Theorem~\ref{thm:main} can be applied in~\cite{Hagen:boundary} everywhere that Theorem 3.10 is used; see Remark~\ref{rem:consequences} below.
\end{remi}

\begin{remi}\label{rem:example}
Consider the following wallspace.  The underlying set is $\reals^2$, and for each integer $n\geq0$, we have a set $\{H_i^n\}_{i\geq0}$ of walls so that:
\begin{itemize}
 \item $H_i^n$ is the image of an embedding $\reals\to\reals^2$ for all $i,n$;
 \item each $\{H_i^n\}_{i\geq0}$ has $H_i^n$ separating $H_{i+1}^n,H_{i-1}^n$ for all $i\geq1$;
 \item whenever $n<m$, we say $H^m_i$ crosses $H^n_j$ for $j>m$ and does not cross $H^n_j$ for $j\leq m$.
\end{itemize}
Let $\mathbf X$ be the dual cube complex, whose hyperplanes we identify with the corresponding walls.  Then each $\{H_i^n\}_{i\geq0}$ is a minimal UBS, and $\mathcal V=\bigsqcup_{n\geq0}\{H_i^n\}_{i\geq0}$ is a UBS satisfying the first conclusion.  However, $\mathcal V=\left(\bigsqcup_{n\geq0}\{H_i^n\}_{i>0}\right)\sqcup\mathcal V'$, where $\mathcal V'=\{H_1^n\}_{n\geq0}$ is a minimal UBS.  So, the finite dimension hypothesis is needed to obtain the ``uniqueness'' clause.
\end{remi}

\begin{proof}[Proof of Theorem~\ref{thm:main}]
Let $\mathcal V$ be a UBS representing $v$.  We first establish some general facts.

\textbf{Chains:}  A \emph{chain} in $\mathcal V$ is a set $\{U_i\}_{i=0}^\infty\subseteq\mathcal V$ of hyperplanes with the property that $U_i$ separates $U_{i+1}$ from $U_{i-1}$ for all $i\geq1$.  The chain $\{U_i\}_{i=0}^\infty$ is \emph{inextensible} in $\mathcal V$ if there does not exist $V\in\mathcal V$, one of whose associated halfspaces contains $U_i$ for all $i\geq0$.  Since $\mathbf X$ contains no infinite set of pairwise crossing hyperplanes, and $\mathcal V$ contains no facing triple, any infinite subset of $\mathcal V$ contains a chain.  Since $\mathcal V$ is unidirectional, any infinite $\mathcal W\subset\mathcal V$ contains a chain which is inextensible in $\mathcal W$.

\textbf{Almost-crossing:}  Let $\mathcal A,\mathcal B\subseteq\mathcal V$ be UBSs.  We write $\mathcal A\prec\mathcal B$ if $B$ crosses all but finitely many $A\in\mathcal A$, for all $B\in\mathcal B$.  Note that we can have $\mathcal A\prec\mathcal B$ and $\mathcal B\prec\mathcal A$ simultaneously; consider the hyperplanes in the standard cubulation of $\Euclidean^2$; in this case we say $\mathcal A,\mathcal B$ are \emph{tied}.  We have:
\begin{enumerate}[(i)]
 \item \label{item:subset} By definition, if $\mathcal A\subseteq\mathcal B$ and $\mathcal C$ is another UBS with $\mathcal B\prec\mathcal C$, then $\mathcal A\prec\mathcal C$.  Similarly, if $\mathcal A\subseteq\mathcal B$ and $\mathcal C\prec\mathcal B$, then $\mathcal C\prec\mathcal A$.
 \item \label{item:transitive} Suppose that $\mathcal A,\mathcal B,\mathcal C$ are minimal UBSs contained in the UBS $\mathcal V$.  Suppose that $\mathcal A\prec\mathcal B$ and $\mathcal B\prec\mathcal C$.  Then one of the following holds:
 \begin{itemize}
  \item $\mathcal A\prec\mathcal C$;
  \item $\mathcal A$ and $\mathcal B$ are tied and $\mathcal C\prec\mathcal A$.
 \end{itemize}
Indeed, this follows by considering $\mathcal A,\mathcal B,\mathcal C$ as the inseparable closures of chains (see below) and using that $\mathcal A\cup\mathcal B\cup\mathcal C\subset\mathcal V$ is unidirectional and contains no facing triple.  
\end{enumerate}

\textbf{Existence of the decomposition:}  The UBS $\mathcal V$ contains a minimal UBS $\mathcal U_1$.  Indeed, $\mathcal V$ contains a chain, being infinite, and contains the inseparable closure of the chain, being inseparable.  The proof of~\cite[Lemma~3.7]{Hagen:boundary} shows that this inseparable closure contains a minimal UBS.

Being infinite and unidirectional, $\mathcal U_1$ contains a chain $\mathcal C_1=\{U_i^1\}_{i=0}^\infty$ which is inextensible in $\mathcal U_1$.  By adding finitely many hyperplanes of $\mathcal V$ to $\mathcal U_1$, we can assume that $\mathcal C_1$ is inextensible in $\mathcal V$.  Moreover, the inseparable closure $\overline{\mathcal C_1}$ of $\mathcal C_1$ in $\mathcal V$ contains a UBS (as above) and is contained in $\mathcal U_1$, whence, by minimality of $\mathcal U_1$, we have $\mathcal U_1=\mathcal F_1\cup\overline{\mathcal C_1}$ for some finite $\mathcal F_1$.  We can remove $\mathcal F_1$ from $\mathcal U_1$ without affecting inseparability. Hence assume that $\mathcal U_1=\overline{\mathcal C_1}$.

Let $\mathcal V_1=\mathcal V-\mathcal U_1$.  If $\mathcal V_1$ is finite, then $\mathcal V$ is almost-equivalent to $\mathcal U_1$, and we are done, with $k=1$.  Hence suppose that $\mathcal V_1$ is infinite.  Note that $\mathcal V_1$ is unidirectional and has no facing triple.

Let $\mathcal V_1=\mathcal V_1^+\sqcup\mathcal V_1^-$, where $\mathcal V_1^+$ is the set of $V\in\mathcal V_1$ so that $V$ crosses all but finitely many elements of $\mathcal U_1$.  If $V\in\mathcal V_1^-$, then $V$ crosses $U_0$, for otherwise $V$ would form a facing triple with $U_j,U_{j'}$ for some $j,j'$.  Moreover, $V$ crosses only finitely many $U_j$.  Indeed, otherwise, $V$ crosses all but finitely many $U_j$, and hence crosses all but finitely many hyperplanes in $\overline{\mathcal C_1}=\mathcal U_1$, and we would have $V\in\mathcal V_1^+$.  

If $V,V'\in\mathcal V_1^-$, and $W$ separates $V,V'$, then $W$ crosses only finitely many elements of $\mathcal U_1$, and hence either $W\in\mathcal V_1^-$ or $W\in\mathcal U_1$.  Moreover, $W$ crosses $U_0$.  Now, since $\mathcal U_1=\overline{\mathcal C_1}$, no element of $\mathcal U_1$ crosses $U_0$, so $W\in\mathcal V_1^-$.  Hence $\mathcal V_1^-$ is inseparable.

On the other hand, suppose $V,V'\in\mathcal V_1^+$ and $W$ separates $V,V'$.  Then $W$ must cross all but finitely many elements of $\mathcal U_1$, so $W\in\mathcal V_1^+$ (the possibility that $W\in\mathcal U_1$ is ruled out since $\mathcal U_1$ is the inseparable closure of $\mathcal C_1$).  Hence $\mathcal V_1^+$ is inseparable.

Thus each of $\mathcal V_1^+$ or $\mathcal V_1^-$ is either finite or a UBS.  By definition, $\mathcal U_1\prec\mathcal V_1^+.$  We now check that, if $\mathcal V_1^-$ is infinite, then $\mathcal V_1^-\prec\mathcal U_1$.  Define a map $f:\mathcal V_1^-\to\naturals$ by declaring $f(V)$ to be the largest $j$ for which $V$ crosses $U_j$.  If $f^{-1}(j)$ is infinite for some $j$, then $f^{-1}(j)$ contains a chain $\mathcal D$, all of whose hyperplanes are separated by $U_j$ from $U_{j'}$ whenever $j'>j+1$, contradicting unidirectionality of $\mathcal V$.  Hence $f^{-1}(j)$ is finite for all $j$.  Together with the facts that each $V\in\mathcal V_1^-$ crosses $U_0$ and $\mathcal U_1=\overline{\mathcal C_1}$, this implies that $\mathcal V_1^-\prec\mathcal U_1$. In summary, one of the following holds:
\begin{enumerate}
 \item $\mathcal V=\mathcal U_1\sqcup\mathcal V^+_1\sqcup\mathcal V_1^-$, where $\mathcal U_1,\mathcal V_1^+$ are UBSs and $|\mathcal V_1^-|<\infty$.  In this case, $\mathcal U_1\cup\mathcal V_1^-$ is inseparable, so, by enlarging $\mathcal U_1$, we can write $\mathcal V=\mathcal U_1\sqcup\mathcal V^+_1$ where $\mathcal U_1\prec\mathcal V_1^+$.\label{item:U_1_min}
 \item $\mathcal V=\mathcal U_1\sqcup\mathcal V^+_1\sqcup\mathcal V_1^-$, where $\mathcal U_1,\mathcal V_1^-$ are UBSs and $|\mathcal V_1^+|<\infty$.  As above, we can, by enlarging $\mathcal U_1$ leaving $\mathcal V$ unchanged, write $\mathcal V=\mathcal U_1\sqcup\mathcal V_1^-$, with $\mathcal V_1^-\prec\mathcal U_1$.\label{item:U_1_max}
 \item $\mathcal V=\mathcal U_1\sqcup\mathcal V^+_1\sqcup\mathcal V_1^-$, and $\mathcal V_1^-\prec\mathcal U_1\prec\mathcal V_1^+$.\label{item:U_1_mid}
\end{enumerate}

We now apply the above construction of minimal sub-UBSs to $\mathcal V_1^+$ (in case~\eqref{item:U_1_min}), to $\mathcal V_1^-$ (in case~\eqref{item:U_1_mid}), or to both (in case~\eqref{item:U_1_max}).  Continuing in this way, using the above facts about $\prec$, we find a countable set $I$ and a subset $\bigsqcup_{i\in I}\mathcal U_i\subseteq\mathcal V$ so that each $\mathcal U_i$ is a minimal UBS, and $\mathcal U_i\prec\mathcal U_j$ or $\mathcal U_j\prec\mathcal U_i$ for all $i,j\in I$, and every chain in $\mathcal V$ is contained in $\bigsqcup_{i\in I}\mathcal U_i$.  Hence $\mathcal V-\mathcal U_i$ consists of those hyperplanes of $\mathcal V$ which do not lie in any chain.  Since any infinite set of hyperplanes in $\mathcal V$ contains a chain, there are finitely many such hyperplanes, so $\mathcal V$ is almost-equivalent to $\bigsqcup_{i\in I}\mathcal U_i$. 

\textbf{Uniqueness and dimension bound when $\dimension\mathbf X<\infty$:}  Observe that for any finite subset of $I$, $\mathbf X$ contains a set of pairwise crossing hyperplanes of the same cardinality, so $|I|=k\leq\dimension\cuco X<\infty$.  The uniqueness statement follows exactly as in the proof of~\cite[Theorem~3.10]{Hagen:boundary}; finite dimension is needed precisely because that argument uses that $k<\infty$.

\textbf{Ordering $I$ when $\dimension\mathbf X<\infty$:}  To complete the proof, it suffices to consider the UBS $\bigsqcup_{i=1}^k\mathcal U_i$, where each $\mathcal U_i$ is a minimal UBS and, for all $i,j$, either $\mathcal U_i\prec\mathcal U_j$ or $\mathcal U_j\prec\mathcal U_i$.  Let $\Gamma$ be the graph with a vertex for each $\mathcal U_i$, with a directed edge from $\mathcal U_i$ to $\mathcal U_j$ if $\mathcal U_i\prec\mathcal U_j$ but $\mathcal U_j\not\prec\mathcal U_i$.  By induction and the properties of $\prec$ established above, $\Gamma$ cannot contain a directed cycle, i.e. $\Gamma$ is a finite directed acyclic graph, whose vertices thus admit a linear order respecting the direction of edges.  Hence we can order (and relabel) the $\mathcal U_i$ so that $\mathcal U_i\prec\mathcal U_j$ when $i<j$.
\end{proof}

\begin{remi}\label{rem:consequences}
 The correction of Theorem~3.10 affects the rest of~\cite{Hagen:boundary} as follows:
\begin{itemize}
 \item Since UBSs are not used in Sections~1 or~2, none of the statements there is affected.
 \item In Section 3, Theorem~3.10 should be adjusted as above.  Following Theorem~3.10, one should then add the standing hypothesis that $\mathbf X$ is finite-dimensional.  The same standing hypothesis should be added in Section 4.
 \item In Section~5, all of the statements involving the simplicial boundary already hypothesize finite dimension, so no statement in that section is affected.
 \item In Sections 6.1 and 6.2, the hypothesis that $\mathbf X$ is strongly locally finite must be replaced everywhere by the hypothesis that $\mathbf X$ is locally finite and finite dimensional, because of the dependence on Theorem 3.14.  
\end{itemize}
\end{remi}

\begin{remi}[The infinite-dimensional case]\label{rem:inf_dim}
If one is interested in the infinite-dimensional, strongly locally finite case, many results in~\cite{Hagen:boundary} are still available with sufficient care.  For example, one can still define the simplicial boundary as the simplicial complex associated to the almost-containment partial ordering on the set of almost-equivalence classes of UBSs, but Remark~\ref{rem:example} shows that in this case, simplices of $\simp\mathbf X$ may contain ``more $0$--simplices than expected''.  Many statements, when rephrased in terms of UBSs rather than boundary simplices, still hold in this generality.  For example, the proof of Theorem~3.19 still shows that any UBS whose equivalence class is maximal in the above partial ordering determines a geodesic ray and Theorems 3.23 and 3.30 hold as written.  
\end{remi}

\bibliographystyle{alpha}
\bibliography{corrigendum}
\end{document}